\definecolor{red}{rgb}{1,0.00,0.00}
\definecolor{blue}{rgb}{0,0.08,0.55}
\definecolor{green}{rgb}{0.0,0.6,0}
\author{Ievgen~V.~Bondarenko}
\title{\textbf{Finite generation of iterated wreath products}}
\newcommand{\at}{\mathop{\mbox{\textcircled{${{\ae}}$}}}}
\newcommand{\Sym}{\mathop{\rm Sym}\nolimits}
\newcommand{\Aut}{\mathop{\rm Aut}\nolimits}
\newcommand{\st}{\mathop{\rm St}\nolimits}
\newcommand{\rs}{\mathop{\rm RiSt}\nolimits}
\newcommand{\tree}{\mathcal{T}}
\newtheorem{thm}{Theorem}
\newtheorem{cor}[thm]{Corollary}
\theoremstyle{definition}
\newtheorem{ex}{Example}
\newtheorem{rem}{Remark}
\begin{document}
\maketitle

\begin{abstract}
Let $(G_n,X_n)$ be a sequence of finite transitive permutation groups with uniformly bounded number
of generators. We prove that the infinitely iterated permutational wreath product $\ldots\wr G_2\wr
G_1$ is topologically finitely generated if and only if the profinite abelian group $\prod_{n\geq
1} G_n/G'_n$ is topologically finitely generated. As a corollary, for a finite transitive group $G$
the minimal number of generators of the wreath power $G\wr \ldots\wr G\wr G$ ($n$ times) is bounded
if $G$ is perfect, and grows linearly if $G$ is non-perfect. As a by-product we construct a
finitely generated branch group, which has maximal subgroups of infinite index, answering
\cite[Question~14]{branch_groups}.\\

\noindent\textbf{Keywords}: iterated wreath product, profinite group, inverse limit, branch group

\noindent\textbf{Mathematics Subject Classification 2000}: 20F05, 20E22, 20E18, 20E08
\end{abstract}

%20F05       Generators, relations, and presentations\\
%20E22       Extensions, wreath products, and other compositions\\
%20E18       Limits, profinite groups\\
%20E08       Groups acting on trees

\section{\large Introduction}

Let $(G,X)$ and $(H,Y)$ be permutation groups (all actions in the paper are faithful). The
permutational wreath product $H\wr G$ is the semidirect product $H^{|X|}\rtimes G$ with a natural
action on $X\times Y$, where $G$ acts on $H^{|X|}$ by permuting the copies of~$H$. Given a sequence
of finite permutation groups $(G_n,X_n)$ the inverse limit of iterated wreath products
\[
W=\lim_{\longleftarrow} \ (G_n\wr\ldots \wr G_2\wr G_1)
\]
is a profinite group called the infinitely iterated wreath product $\ldots\wr G_2\wr G_1$. The goal
of this note is to prove the following theorem.

\begin{thm}\label{thm_main}
Let $(G_n,X_n)$ be a sequence of finite transitive permutation groups with uniformly bounded number
of generators. Then the profinite group $\ldots\wr G_2\wr G_1$ is finitely generated if and only if
the profinite abelian group $\prod_{n\geq 1} G_n/G'_n$ is finitely generated.
\end{thm}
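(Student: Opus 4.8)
The plan is to reduce the theorem to a uniform bound on the minimal number of generators $d(W_n)$ of the finite iterated wreath products $W_n:=G_n\wr\cdots\wr G_1$, and to obtain that bound from a single ``one-step'' estimate that is then iterated.

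First I would record the elementary fact that a profinite group $W=\varprojlim_n W_n$ with surjective transition maps is topologically finitely generated if and only if $\sup_n d(W_n)<\infty$. One direction is trivial; for the other, if $d(W_n)\le D$ for all $n$, the finite sets $S_n$ of generating $D$-tuples of $W_n$ are nonempty and the (surjective) transition maps carry $S_{n+1}$ into $S_n$, so $\varprojlim_n S_n\ne\emptyset$ and any element of it topologically generates $W$. Next I would compute abelianizations: because $G$ is transitive on $X$, the subgroup $[G,H^{|X|}]$ of $H\wr G=H^{|X|}\rtimes G$ identifies all coordinates of the base group, whence $(H\wr G)^{\mathrm{ab}}\cong H^{\mathrm{ab}}\times G^{\mathrm{ab}}$ (here $(\cdot)^{\mathrm{ab}}=(\cdot)/(\cdot)'$). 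Iterating this isomorphism gives $W_n^{\mathrm{ab}}\cong\prod_{k=1}^n G_k/G_k'$, and passing to the limit $W^{\mathrm{ab}}\cong\prod_{n\ge 1}G_n/G_n'$. In particular, if $W$ is topologically finitely generated, so is its quotient $\prod_{n\ge 1}G_n/G_n'$: this is the ``only if'' direction. It also shows that the hypothesis of the ``if'' direction says exactly that $c:=\sup_n d(W_n^{\mathrm{ab}})<\infty$.

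For the ``if'' direction I would aim to prove $d(W_n)\le\max\{d,\,c+1\}$ for all $n$, where $d:=\sup_n d(G_n)$. This follows by induction on $n$ from the one-step estimate
\[
d(H\wr G)\ \le\ \max\bigl\{\,d(G),\ d\bigl((H\wr G)^{\mathrm{ab}}\bigr)+1\,\bigr\},
\]
valid whenever $G$ is transitive on a set of at least two points (factors $G_n$ acting on a one-point set are trivial and may be discarded). Applying it with $H=G_n$ and $G=W_{n-1}$---so that $H\wr G=W_n$ and, by the computation above, $(H\wr G)^{\mathrm{ab}}=W_n^{\mathrm{ab}}$---gives $d(W_n)\le\max\{d(W_{n-1}),\,d(W_n^{\mathrm{ab}})+1\}\le\max\{d(W_{n-1}),\,c+1\}$; since $d(W_1)=d(G_1)\le d$, induction yields the claimed bound, hence $\sup_n d(W_n)<\infty$ and $W$ is topologically finitely generated. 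It is essential that the ``$+1$'' sit inside the maximum rather than be added to it, or the bound would grow linearly in $n$; this is precisely why the crude estimate $d(H\wr G)\le d(G)+d(H)$ does not suffice, and, in the corollary, why the perfect case stays bounded while the non-perfect case grows linearly.

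The main obstacle is thus the one-step estimate. I would prove it constructively: fix generators $g_1,\dots,g_{d(G)}$ of $G$ and produce the base group $B=H^{|X|}$ using as few extra elements as possible. The point is that $\langle g_1,\dots,g_{d(G)},v_1,\dots,v_s\rangle=H\wr G$ with $v_j\in B$ if and only if the $G$-translates of $v_1,\dots,v_s$ generate $B$; transitivity lets a single element of $B$ be spread over all coordinates by $G$-conjugation, and one can further ``decorate'' the $g_i$ with suitable elements of $B$ so that the relations of $G$ manufacture additional elements of $B$ at no cost. The effect is that the ``perfect part'' of $B$ is obtained for free, and the genuine cost is concentrated in the abelian quotient, requiring about $d(H^{\mathrm{ab}}\times G^{\mathrm{ab}})$ honest extra generators to kill the remaining $B/[B,B][G,B]$-type obstruction. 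The delicate part---where I expect the real work---is the bookkeeping: tracking the chief factors of $H^{|X|}$ as a $G$-group and verifying that no module-theoretic obstruction forces further generators. For $H$ perfect this collapses entirely, yielding $d(H\wr G)\le\max\{d(G),\mathrm{const}\}$ and hence the boundedness asserted in the corollary for wreath powers of a perfect group.
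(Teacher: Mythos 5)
Your reduction of the theorem to the single inequality $d(H\wr G)\le\max\{d(G),\,d((H\wr G)^{\mathrm{ab}})+1\}$ is a genuinely different strategy from the paper, which never bounds $d(W_n)$ abstractly but instead writes down an explicit finitely generated dense subgroup of $W$ using directed automorphisms supported along the ray $y_1y_2\cdots$, and recovers the abelian parts of the $G_n$ by a coprime-orders power trick. Your preliminary steps are fine: the inverse-limit criterion, the isomorphism $(H\wr G)^{\mathrm{ab}}\cong H^{\mathrm{ab}}\times G^{\mathrm{ab}}$ for transitive $G$, and hence the ``only if'' direction, are all correct, and the induction would indeed close if the one-step estimate held. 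But the whole weight of the argument rests on that estimate, and there are two problems with it.

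First, as stated it is false. Take $G=C_2$ acting on two points and $H$ perfect with $d(H)$ large (e.g.\ a suitable power $A_5^N$ with its regular action; by Wiegold $d(A_5^N)\to\infty$). Then $(H\wr C_2)^{\mathrm{ab}}=C_2$, so your bound claims $d(H\wr C_2)\le 2$; but $H^2$ has index $2$ in $H\wr C_2$, so the Schreier index formula gives $d(H)\le d(H^2)\le 2\,d(H\wr C_2)-1$, forcing $d(H\wr C_2)\ge (d(H)+1)/2$. Any correct version must therefore carry an extra term comparing $d(H)$ with $|X|$, as in Lucchini's exact formula (\ref{eq_solvable1}) for solvable $H$. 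In your application ($H=G_n$ with $d(G_n)\le d$, $G=W_{n-1}$ acting on a huge set) such a term is harmless, so the approach is not necessarily doomed --- but the lemma needs to be restated and actually proved. Second, and more seriously, the proof you offer for it is not a proof: the paragraph about ``decorating'' the generators of $G$ so that ``the relations of $G$ manufacture additional elements of $B$ at no cost,'' with the module-theoretic ``bookkeeping'' deferred, is exactly where the entire difficulty lives. For solvable $H$ this bookkeeping is Lucchini's theorem (a nontrivial result via crowns and cohomology, cited as (\ref{eq_solvable1})--(\ref{eq_solvable2}) in the paper); for arbitrary finite $H$, which is what the induction requires since the $G_n$ are arbitrary transitive groups, no such clean formula is available off the shelf, and obtaining the perfect part of the base group ``for free'' is precisely the content of the Bhattacharjee--Quick--Segal circle of results that the paper's explicit construction is designed to reprove and extend. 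So the proposal identifies a plausible alternative route but leaves its key lemma both incorrectly stated and unestablished.
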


Since the group $\prod_{n\geq 1} G_n/G'_n$ is an epimorphic image of the group $\ldots\wr G_2\wr
G_1$, in one direction the statement is obvious. For the converse we construct a finitely generated
dense subgroup. The construction is based on the notions of directed automorphisms and branch
groups introduced by R.~Grigorchuk \cite{branch:gri}, and it is basically the same construction
used by P.~Neumann \cite{neumann86}, L.~Bartholdi \cite{uniform:bart}, D.~Segal \cite{fin_im:segal}
and others to construct certain groups with interesting properties.

M.~Bhattacharjee \cite{bhattach} showed that the infinitely iterated wreath products of alternating
groups of degree $\geq 5$ can be generated by two elements even with positive probability. This was
generalized to wreath products of non-abelian simple groups with transitive actions by M.~Quick
\cite{quick1,quick2}. D.~Segal \cite{fin_im:segal} showed that the infinitely iterated wreath
products of perfect groups with certain conditions on the actions are finitely generated. The
iterated wreath products of cyclic groups of pairwise coprime orders are two-generated by result of
A.~Woryna \cite{woryna}.

The profinite group $W$ is finitely generated if and only if the sequence $d(G_n\wr \ldots \wr
G_2\wr G_1)$ is bounded, where $d(G)$ is the minimal number of generators of the group $G$. The
behavior of generating sequence $d(G^n)$ for the direct product $G^n$ of a finite group $G$ was
described in a series of papers by J.~Wiegold (see \cite{growth_seqII,growth_fgg} and references
therein): the sequence $d(G^n)$ is roughly logarithmic if $G$ is perfect, and is linear otherwise.
The generating sequence $d_n^{wr}(G)=d(G\wr\ldots\wr G\wr G)$ for the wreath power of a finite
transitive group $G$ happens to be even more simple. As a corollary of Theorem~\ref{thm_main} we
get that $d_n^{wr}(G)$ is bounded if $G$ is perfect, and grows linearly otherwise.

The wreath powers of finite groups are related to self-similar groups of finite type introduced by
R.~Grigorchuk \cite[Section~7]{solved:gri} to describe profinite completion of certain branch
groups. Every such group is given by a finite pattern, and there is an open question
\cite[Problem~7.3]{solved:gri}: which patterns define finitely generated groups. Every self-similar
group of finite type given by pattern of size $1$ is isomorphic to the infinite wreath power
$\ldots\wr G\wr G$. Hence it will be finitely generated if and only if the group $G$ is perfect.
Moreover, in this case the finitely generated dense subgroup constructed in the proof of
Theorem~\ref{thm_main} is a branch group, which has maximal subgroups of infinite index. This
answers Question~14 in \cite[p.~1107]{branch_groups}.\\[-0.3cm]

\noindent\textbf{Acknowledgments.} I would like to thank Rostyslav Kravchenko and Dmytro Savchuk
for fruitful discussions.

\section{\large Automorphisms of spherically homogeneous rooted tree}

Let $\tree$ be the spherically homogeneous rooted tree given by the alphabets $X_i$, where the
$n$-th level is $\tree_n=X_1\times\ldots\times X_n$ (here $\tree_0$ consists of the root
$\emptyset$), and each vertex $v\in \tree_n$ is connected with $vx\in \tree_{n+1}$ for every $x\in
X_{n+1}$. Denote by $\tree^{[n]}$ the finite truncated rooted tree consisting of levels from $0$ to
$n$. For a vertex $v\in\tree$ denote by $\tree_v$ the rooted tree hanging ``below'' the vertex $v$,
which consists of vertices $u$ such that the geodesic connecting $u$ with the root $\emptyset$
passes through~$v$. The set $vX_n$ is the first level of tree $\tree_v$ for $v\in\tree_{n-1}$.

Every automorphism $g\in\Aut\tree$ induces a map $vX_n\rightarrow g(v)X_n$ for every vertex
$v\in\tree_{n-1}$. Forgetting the prefixes $v$ and $g(v)$ we get a permutation on the set $X_n$,
which is called the \textit{vertex permutation of $g$ at $v$} and denoted $g \at v\in\Sym(X_{n})$.
Every automorphism can be given by its vertex permutations at each vertex.

The automorphism group of the tree $\tree^{[n]}$ is the iterated permutational wreath product
\[
\Aut\tree^{[n]}=\Sym(X_{n})\wr \ldots\wr \Sym(X_1),
\]
and the group $\Aut\tree$ is the inverse limit of these groups, which is also the infinite
permutational wreath product $\ldots\wr\Sym(X_2)\wr\Sym(X_1)$. The group $\ldots\wr G_2\wr G_1$ can
be naturally considered as a subgroup of $\Aut\tree$. We also identify the group $G_1$ with the
subgroup of $\Aut\tree$, which consists of automorphisms whose vertex permutations at the root form
the group $G_1$ and are trivial at the other vertices.

Let $G$ be a subgroup of $\Aut\tree$. The \textit{vertex stabilizer} $\st_G(v)$ of a vertex
$v\in\tree$ is the subgroup of all $g\in G$ such that $g(v)=v$. The \textit{$n$-th level
stabilizer} $\st_G(n)$ is the subgroup of all $g\in G$ such that $g(v)=v$ for every
$v\in\tree_{n}$. The \textit{rigid vertex stabilizer} $\rs_G(v)$ of a vertex $v\in\tree$ is the
subgroup of all $g\in G$ such that the vertex permutations of $g$ at vertices outside the subtree
$\tree_v$ are trivial. The \textit{rigid level stabilizer} $\rs_G(n)$ is the subgroup generated by
$\rs_G(v)$ for $v\in\tree_n$. The group $G$ is called \textit{branch}, if it acts transitively on
the levels $\tree_n$ and every rigid level stabilizer $\rs_G(n)$ is of finite index in $G$.

We say that the rigid stabilizer $\rs_G(v)$ for $v\in\tree_{n-1}$ contains a subgroup
$H\in\Sym(X_n)$ as a \textit{rooted subgroup}, if $\rs_G(v)$ contains a subgroup whose vertex
permutations at $v$ form the group $H$ and all the other vertex permutations are trivial.

\section{\large  Proof of Theorem~\ref{thm_main} and Corollaries}

\begin{proof}(of Theorem~\ref{thm_main})
The trivial groups $G_n$ can be omitted and we can assume $|X_n|\geq 2$ for all $n$. Also we can
assume that every group $G_n$ is non-abelian, otherwise we can pass to a new sequence given by a
different arrangement of brackets $\ldots\wr(G_6\wr G_5)\wr (G_4\wr G_3)\wr (G_2\wr G_1)$ for which
it is true (the permutational wreath product is associative). By the same reason (passing to the
same arrangement of brackets with already non-abelian groups) we can assume that for every action
$(G'_n,X_n)$ there are points in the same orbit with different stabilizers. Fix letters $x_n,y_n\in
X_n$ and permutations $\tau_n,\pi_n\in G'_n$ such that $\tau_n(x_n)=y_n$, $\pi_n(x_n)=x_n$, and
$\pi_n(y_n)\neq y_n$.

Consider the decomposition of the groups $G_n/G'_n$ is the direct sum of cyclic groups of
prime-power order. Since the group $\prod_{n\geq 1} G_n/G'_n$ is finitely generated, there is an
absolute bound on the number of cyclic $p$-groups in these decomposition for any particular prime
number $p$. Hence there is a finite generating set $a_1=(a_1^{(n)})_n,\ldots, a_e=(a_e^{(n)})_n$
such that for every fixed $i$ two elements $a_i^{(j)}$ and $a_i^{(j')}$ have coprime orders for
every $j,j'$. Let $g_i^{(n)}\in G_n$ be a preimage of $a_i^{(n)}$ under the canonical projection
$G_n\rightarrow G_n/G'_n$. Then $G_n=\langle g_1^{(n)},\ldots, g_e^{(n)},G'_n\rangle$ for every
$n$. Since there is a uniform bound on the number of generators of $G_n$, we can complete the
elements $g_1^{(n)},\ldots, g_e^{(n)}$ to a generating set of $G_n$ by some elements
$g_{e+1}^{(n)},\ldots, g_m^{(n)}$, i.e. $G_n=\langle g_1^{(n)},\ldots, g_m^{(n)}\rangle$ for every
$n$ with fixed $m$. Define the automorphisms $g_1,\ldots, g_m$ of the tree $\tree$ by their vertex
permutations
\begin{equation}\label{eq_thm_defi_gi}
g_i\at v=\left\{
           \begin{array}{ll}
             g_i^{(k+1)}, & \hbox{if $v=y_1y_2\ldots y_{k-1}x_k$;} \\
             1, & \hbox{otherwise.}
           \end{array}
         \right.
\end{equation}

Consider the group $G=\langle g_1^{(1)},\ldots,g_m^{(1)},g_1,\ldots,g_m\rangle$ and let us prove
that $G$ is dense in the group $W$. Fix $n$ and let us show that the restriction of $G$ on the tree
$\tree^{[n]}$ coincides with the group $G_{n}\wr\ldots\wr G_2\wr G_1$.

Since the group $G$ acts transitively on the set $X_1$ and the vertex stabilizer $\st_G(y_1\ldots
y_{n-2}x_{n-1})$ acts transitively on $y_1\ldots y_{n-2}x_{n-1}X_n$ for every $n$, inductively we
get that the group $G$ acts transitively on the levels of $\tree$.

Let us prove that the rigid vertex stabilizers $\rs_G(y_1\ldots y_{k-1}y_k)$ and $\rs_G(y_1\ldots
y_{k-1}x_k)$ contain the group $G'_{k+1}$ as a rooted subgroup. Since elements $g_1^{(1)},\ldots,
g_m^{(1)}$ generate the group $G_1$ as a rooted subgroup at the root of the tree, the statement
holds at zero level. Assume that we have proved it for all levels $<k$. By inductive hypothesis
there exist $\pi,\tau\in G$ such that $\pi\at {y_1\ldots y_{k-1}}=\pi_k$, $\tau \at {y_1\ldots
y_{k-1}}=\tau_k$, and all the other vertex permutations are trivial. Then
\begin{eqnarray*}
\pi^{-1}g_i\pi \at {y_1\ldots y_{k-1}x_k}=g_i^{(k+1)},\quad \pi^{-1}g_i\pi \at {y_1\ldots
y_{s-1}x_s}=1\quad\mbox{ for } \ s>k \\ \Rightarrow\qquad [{\pi^{-1}g_i\pi,g_j}] \at {y_1\ldots
y_{l-1}x_l}=[g_i^{(l+1)},g_j^{(l+1)}]\quad\mbox{ for } \ l\leq k
\end{eqnarray*}
and all the other vertex permutations of $[\pi^{-1}g_i\pi,g_j]$ are trivial for all $g_i,g_j$. By
inductive\linebreak hypothesis, we can multiply $[\pi^{-1}g_i\pi,g_j]$ on the appropriate elements
from $\rs_G(y_1\ldots y_{l-1}x_l)$ for $l<k$ to remove all commutators at vertices $y_1\ldots
y_{l-1}x_l$, and get elements from $\rs_G(y_1\ldots y_{k-1}x_k)$. Conjugating by generators
$g_1,\ldots,g_m$ we get that $\rs_G(y_1\ldots y_{k-1}x_k)$ contains $G'_{k+1}$ as a rooted
subgroup. Conjugating by $\tau$ we get that $\rs_G(y_1\ldots y_{k-1}y_k)$ contains $G'_{k+1}$ as a
rooted subgroup.

The elements $a_i^{(1)},\ldots,a_i^{(n)}$ have pairwise coprime orders (here $i\leq e$), and for a
fixed $k\leq n$ we can choose a power $\alpha$ such that $(g_i)^{\alpha} \at {y_1\ldots
y_{k-2}x_{k-1}}=g_i^{(k)}h_i$ for some $h_i\in G'_{k}$, $(g_i)^{\alpha} \at {y_1\ldots
y_{l-2}x_{l-1}}\in G'_l$ for all $l\leq n$, $l\neq k$, and all the other vertex permutations at the
vertices of $\tree^{[n]}$ are trivial. Multiplying $(g_i)^{\alpha}$ on the corresponding elements
from $\rs_G(y_1\ldots y_{l-2}x_{l-1})$ we can remove the elements from commutants, and get
element\linebreak $f_i\in G$ such that $f_i \at {y_1\ldots y_{k-2}x_{k-1}}=g_i^{(k)}$ with all the
other vertex permutations at the vertices of $\tree^{[n]}$ being trivial. It follows that the group
$G$ contains a subgroup $H$ such that $H \at y_1\ldots y_{k-2}x_{k-1}=G_k$ and the vertex
permutations of every element of $H$ at the vertices of $\tree^{[n]}$ and outside $\tree_{y_1\ldots
y_{k-2}x_{k-1}}$ are trivial. Since the action is transitive this holds for every vertex
$v\in\tree_{k-1}$. The result follows.
\end{proof}

\begin{rem}
We got the bound on the number of generators. In some cases one can reduce the number of generators
as it was done in \cite{fin_im:segal} using Lemma~2 there.
\end{rem}

The next corollary is a generalization of result in \cite{woryna}.

\begin{cor}\label{cor_abel}
For a sequence $A_n$ of finite abelian transitive groups with pairwise coprime orders and uniformly
bounded number of generators the iterated wreath product $\ldots\wr A_2\wr A_1$ is finitely
generated.
\end{cor}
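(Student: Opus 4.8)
The plan is to derive Corollary~\ref{cor_abel} directly from Theorem~\ref{thm_main} by checking that its hypotheses and the abelian criterion are satisfied in this special case.

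First I would set $G_n = A_n$ and verify the standing assumption of the theorem: each $A_n$ is a finite transitive permutation group, and by hypothesis the number of generators $d(A_n)$ is uniformly bounded. Hence the infinitely iterated wreath product $\ldots\wr A_2\wr A_1$ falls under the scope of Theorem~\ref{thm_main}, which tells us it is topologically finitely generated if and only if the profinite abelian group $\prod_{n\geq 1} A_n/A'_n$ is topologically finitely generated.

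Next I would analyze $\prod_{n\geq 1} A_n/A'_n$. Since each $A_n$ is abelian we have $A'_n = 1$, so this is simply $\prod_{n\geq 1} A_n$. The key point is that the $A_n$ have pairwise coprime orders: writing $A_n$ as a direct sum of cyclic groups of prime-power order, for each fixed prime $p$ at most one $A_n$ can contribute any $p$-part, and that single $A_n$ contributes at most $d(A_n)$ cyclic $p$-factors. Therefore, when we collect the prime-power cyclic factors of $\prod_{n\geq 1} A_n$ by prime, the number of cyclic $p$-groups appearing for any fixed $p$ is at most $\sup_n d(A_n)$, which is finite by the uniform bound. A profinite abelian group decomposed as a product of cyclic $p$-groups is topologically finitely generated precisely when, for each prime $p$, only finitely many (indeed, uniformly boundedly many) cyclic $p$-groups occur; so $\prod_{n\geq 1} A_n$ is topologically finitely generated, with the number of generators bounded by $\sup_n d(A_n)$.

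Applying the ``if'' direction of Theorem~\ref{thm_main} then yields that $\ldots\wr A_2\wr A_1$ is (topologically) finitely generated, which is the claim. I do not anticipate a genuine obstacle here: the corollary is essentially a bookkeeping exercise reducing to Theorem~\ref{thm_main}, and the only mildly delicate point is the standard structural fact that a profinite abelian group is finitely generated iff its Sylow components are each finitely generated with a uniform bound — but in the coprime-orders situation this is transparent, since the $p$-part of the whole product coincides with the $p$-part of a single factor $A_n$.
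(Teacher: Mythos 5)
Your proof is correct and follows exactly the route the paper intends: the corollary is a direct application of Theorem~\ref{thm_main}, using that pairwise coprimality forces the $p$-part of $\prod_{n\geq 1} A_n$ to come from a single factor $A_n$, so the product is topologically generated by $\sup_n d(A_n)$ elements. The only point worth keeping explicit (which you do) is that ``finitely many cyclic $p$-factors for each $p$'' must in fact be a bound uniform in $p$ for topological finite generation.
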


The last corollary also follows from Theorem~2 in \cite{gen_wreath_aug}, which says that if $(G,X)$
is a finite transitive permutation group and $H$ is a finite solvable group, then
\begin{equation}\label{eq_solvable1}
d(H\wr G)=\max\left(d(H/H'\wr G), \left[\frac{d(H)-2}{|X|}\right]+2\right).
\end{equation}
Moreover, by Corollary~6 in \cite{gen_wreath_aug}, if $H$ is abelian of coprime order with $|G|$,
then
\begin{equation}\label{eq_solvable2}
d(H\wr G)=\max\left(d(G), d(H)+1\right).
\end{equation}
Hence $d(A_n\wr\ldots\wr A_2\wr A_1)=\max_{2\leq i\leq n} (d(A_1), d(A_i)+1)$ under conditions of
Corollary~\ref{cor_abel}.

The next example shows that we cannot remove assumption on the number of generators in
Theorem~\ref{thm_main}.
\begin{ex}
Take a sequence of finite solvable transitive groups $(G_n,X_n)$ such that the groups $G_n/G'_n$
are cyclic with pairwise coprime orders and $d(G_n)$ is greater than $n|X_1||X_2|\cdots|X_{n-1}|$.
Then the sequence $d(G_n\wr\ldots\wr G_2\wr G_1)$ is not bounded by (\ref{eq_solvable1}), the group
$W$ is not finitely generated, while the group $\prod_{n\geq 1} G_n/G'_n$ is procyclic.
\end{ex}

The next examples show that the uniform bound on the number of generators in not necessary for the
group $W$ to be finitely generated.
\begin{ex}
Take a sequence of finite solvable transitive groups $(G_n,X_n)$ such that every group $G_n/G'_n$
is cyclic of order coprime with $G_{n-1}\wr\ldots\wr G_2\wr G_1$, and
$|X_1||X_2|\cdots|X_{n-1}|>d(G_n)\rightarrow\infty$. Then the sequence $d(G_n\wr\ldots\wr G_2\wr
G_1)$ is bounded by (\ref{eq_solvable1}) and (\ref{eq_solvable2}), hence the group $W$ is finitely
generated.
\end{ex}

\begin{ex}
Using result in \cite{gen_wreath_two} one can construct a sequence of finite perfect transitive
groups $(G_n,X_n)$ for $n\geq 2$ with the following properties: there are generating sets
$G_n=\langle g_1^{(n)},\ldots,g_{m_n}^{(n)}\rangle$ such that the elements $g_i^{(j)}$ have
pairwise coprime orders for all $i,j$, and $|X_{n-1}|>d(G_n)=m_n\rightarrow\infty$ when
$n\rightarrow\infty$. Take the cyclic group $G_1=\langle a\rangle$ of order $d(G_2)+1$ with the
regular action $(G_1,X_1)$. Fix different letters $x_n^{(1)},\ldots,x_{n}^{(m_n)},y_n\in X_n$ for
every $n$ and define the automorphism $b$ of the tree $\tree$ by its vertex permutations
\[
b \at v=\left\{
        \begin{array}{ll}
          g_i^{(n+1)}, & \hbox{if $v=y_1y_2\ldots y_{n-1}x_n^{(i)}$;} \\
          1, & \hbox{otherwise.}
        \end{array}
      \right.
\]
\end{ex}
Since the elements $g_i^{(j)}$ have coprime orders we can take a power of $b$ to get any
$g_i^{(j)}$ rooted at vertex $y_1\ldots y_{j-2}x_{j-1}^{(i)}$ with all the other vertex
permutations at vertices of $\tree^{[n]}$ being trivial for every fixed $n$. Since the group
$\langle a,b\rangle$ acts transitively on the levels of $\tree$, conjugating we get the group $G_j$
as a rooted subgroup at every vertex of $(j-1)$-th level with all the other vertex permutations at
vertices of $\tree^{[n]}$ being trivial. Hence the group $\langle a,b\rangle$ is dense in $W$,
while the abelianization of $W$ is the finite cyclic group $G_1$ and $d(G_n)\rightarrow\infty$.

%\begin{cor}
%Let $G_i<Sym(X_i)$ and $max |X_i|<\infty$. Then the group $G_1\wr G_2\wr \ldots$ is finitely
%generated if and only if for all sufficiently large $n$ the group $G_n$ is perfect and acts
%transitively on $X_n$.
%\end{cor}

\begin{cor}
Let $H$ be a finite transitive group. The infinite wreath power $\ldots\wr H\wr H$ is finitely
generated if and only if the group $H$ is perfect.
\end{cor}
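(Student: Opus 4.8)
The plan is to obtain this corollary as a direct specialization of Theorem~\ref{thm_main} to the constant sequence $G_n=H$, $X_n=X$ for all $n$. First I would check that the hypothesis of the theorem is automatically satisfied: since all the groups coincide, the minimal number of generators $d(G_n)=d(H)$ is constant, hence uniformly bounded, and each $G_n=H$ is transitive by assumption. Theorem~\ref{thm_main} therefore applies and tells us that $\ldots\wr H\wr H$ is (topologically) finitely generated if and only if the profinite abelian group $\prod_{n\geq 1}H/H'$ is finitely generated.

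It then remains to decide when $\prod_{n\geq 1}H/H'$ is topologically finitely generated. If $H$ is perfect, then $H/H'$ is trivial, so this product is the trivial group, which is generated by the empty set; hence $\ldots\wr H\wr H$ is finitely generated. Conversely, assume $H$ is not perfect, so that $A=H/H'$ is a nontrivial finite abelian group. Choosing a prime $p$ dividing $|A|$, the group $A$ admits $C_p$ as a quotient, so $\prod_{n\geq 1}A$ surjects continuously onto $\prod_{n\geq 1}C_p$. For every $k$ this latter group surjects onto $C_p^k$, which requires at least $k$ generators; hence $\prod_{n\geq 1}C_p$, and a fortiori $\prod_{n\geq 1}A$, is not topologically finitely generated. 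By Theorem~\ref{thm_main} the group $\ldots\wr H\wr H$ is then not finitely generated, completing the equivalence.

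I do not anticipate a genuine obstacle: the statement is essentially a restatement of Theorem~\ref{thm_main} in the homogeneous case. The only point deserving an explicit word is that an infinite Cartesian power of a nontrivial finite abelian group fails to be topologically finitely generated, and this is handled by the elementary remark that $d(C_p^k)\to\infty$ as $k\to\infty$.
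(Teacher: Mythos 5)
Your proposal is correct and matches the paper's treatment: the corollary is obtained there as an immediate specialization of Theorem~\ref{thm_main} to the constant sequence $G_n=H$, with the observation that $\prod_{n\ge 1}H/H'$ is topologically finitely generated exactly when $H/H'$ is trivial. Your explicit justification that an infinite power of a nontrivial finite abelian group is not topologically finitely generated (via surjections onto $C_p^k$) is a detail the paper leaves implicit, and it is correct.
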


Consider the previous corollary in more details. Let $H=\langle h_1,\ldots,h_m\rangle$ (we assume
the group $H$ satisfies the first paragraph of the proof of Theorem~\ref{thm_main},\linebreak
otherwise take $H\wr H$), then the dense group $G$ constructed in the proof of\linebreak
Theorem~\ref{thm_main} is generated by $h_1,\ldots, h_m$ and $g_1,\ldots,g_m$, where every
automorphism $g_i$ is defined recursively by
\[
g_i(v)=\left\{
         \begin{array}{ll}
           xg_i(u), & \hbox{$v=xu$;} \\
           yh_i(u), & \hbox{$v=yu$;} \\
           v, & \hbox{otherwise.}
         \end{array}
       \right.
\]
($x=x_1$ and $y=y_1$ are from the theorem). Here $\langle g_1,\ldots,g_m\rangle\simeq H$, and hence
the group $G$ is perfect, because it is generated by perfect subgroups. In the same way as in the
proof of Theorem~\ref{thm_main} we get that $\st_G(n)=\rs_G(n)\simeq G\times\ldots\times G$. In
particular, the group $G$ is branch (actually regular branch over itself, see definition in
\cite{branch:gri}). It is also just-infinite by \cite[Theorem~3]{branch:gri}, and satisfies the
congruence subgroup property by \cite[Proposition~2]{branch:gri}, i.e. every subgroup of finite
index contains the stabilizer of some level. In particular, the group $W=\ldots \wr H\wr H$ is not
only the closure of the group $G$ in $\Aut\tree$, but also the profinite completion of~$G$.

Consider the subgroup $F<G$, which consists of all automorphisms from $G$ that have only finitely
many non-trivial vertex permutations (finitary automorphisms). Since $\rs_G(v)$ contains $H$ as a
rooted subgroup for every vertex $v$, the group $F$ consists of all automorphisms of the tree
$\tree$, which have only finitely many non-trivial vertex permutations and all vertex permutations
are from the group $H$. In particular, the subgroup $F$ is dense in the groups $W$ and $G$. Also
$F$ is a proper subgroup of $G$ (here $g_1,\ldots,g_m$ are not in $F$). Since the group $G$ is
finitely generated, the subgroup $F$ is contained in some maximal subgroup $M$, which is also dense
in $G$ and hence of infinite index. Indeed, if $M$ had finite index then it would contain some
level stabilizer $\st_G(n)$; but $M/\st_G(n)=G/\st_G(n)$ and we get $M=G$, which contradicts
maximality of $M$. This answers Question~14 in\linebreak \cite[p.~1107]{branch_groups} (see
discussion in \cite[Section~6]{solved:gri}).

%More precisely, the group $F$ is the direct limit of finite wreath powers $H\wr\ldots\wr H$.

\begin{cor}
Let $H$ be a finite transitive group and put $d_n^{wr}(H)=d(H\wr \ldots \wr H\wr H)$. The sequence
$d_n^{wr}(H)$ is bounded if $H$ is perfect, and grows linearly otherwise.
\end{cor}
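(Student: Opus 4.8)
The plan is to deduce this corollary directly from Theorem~\ref{thm_main} together with the preceding corollary, distinguishing the two cases.

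\medskip

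\textbf{Case 1: $H$ perfect.} Here $H = H'$, so $H/H' $ is trivial and the profinite abelian group $\prod_{n\geq 1} H/H'$ is trivial, hence (trivially) topologically finitely generated. Since the wreath power uses the fixed group $H$ in every coordinate, the uniform bound on the number of generators in Theorem~\ref{thm_main} holds (one can take the bound to be $d(H)$, or appeal to the first paragraph of the proof of Theorem~\ref{thm_main} and pass to $H\wr H$ if the extra hypotheses are needed, as is done in the previous corollary). Theorem~\ref{thm_main} then gives that $W = \ldots\wr H\wr H$ is topologically finitely generated, say $W = \overline{\langle S\rangle}$ with $|S| = d(W) < \infty$. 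For each finite $n$ the natural projection $W \twoheadrightarrow H\wr\ldots\wr H$ ($n$ times) shows $d(H\wr\ldots\wr H) \leq d(W)$, so $d_n^{wr}(H) \leq d(W)$ for all $n$; that is, $d_n^{wr}(H)$ is bounded.

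\medskip

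\textbf{Case 2: $H$ non-perfect.} Then $H/H'$ is a nontrivial finite abelian group, so it contains a nontrivial cyclic $p$-group $C_{p^r}$ as a quotient for some prime $p$. Consequently $H\wr\ldots\wr H$ ($n$ times) surjects onto its abelianization, which surjects onto $(H/H')^n$, which in turn surjects onto $C_{p^r}^{\,n} \cong (\mathbb{Z}/p^r)^n$. A finite abelian $p$-group $(\mathbb{Z}/p^r)^n$ requires exactly $n$ generators (its Frattini quotient has dimension $n$ over $\mathbb{F}_p$), so $d_n^{wr}(H) \geq n$. For the matching upper bound, observe that $d_n^{wr}(H)$ is at most the number of generators of $H\wr\ldots\wr H$ obtained by the naive estimate $d(A\wr B) \leq |B/?|\cdot d(A) + d(B)$; more directly, using the generating set from the proof of Theorem~\ref{thm_main} one gets $d_n^{wr}(H) \leq 2\,d(H)$ in the perfect case, but in the non-perfect case we instead just need \emph{some} linear upper bound: an iterated wreath product of $n$ copies of a fixed finite group $H$ embeds in $\Aut\tree^{[n]}$ and is generated by at most (number of vertices of $\tree^{[n-1]}$)$\cdot d(H) + O(1)$ elements — but this is exponential, not linear. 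So the honest route to the linear upper bound is to invoke the structure of the wreath product directly: $d(H\wr K) \leq d(H/H'\wr K) + \lceil (d(H)-1)/|X| \rceil$-type bounds. The cleanest is to cite formula~(\ref{eq_solvable1})'s analogue or, better, to note that since $H/H'$ is a fixed nontrivial finite abelian group, $d((H/H')\wr\ldots\wr(H/H'))$ grows linearly (this is the classical Wiegold-type behavior for abelian, already recorded via (\ref{eq_solvable1})--(\ref{eq_solvable2}) for abelian groups of coprime order, and in general follows from the fact that the abelianization of the iterated wreath product of a fixed finite abelian group $A$ with transitive actions has rank $\Theta(n)$ and the whole group is generated by $O(n)$ elements), and then bootstrap from $H/H'$ to $H$ using the perfect-case boundedness of $d(H'\wr\ldots\wr H')$-pieces.

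\medskip

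\textbf{Main obstacle.} The lower bound $d_n^{wr}(H)\geq n$ is immediate from the abelianization argument. The real work is the \emph{linear upper bound} $d_n^{wr}(H) = O(n)$ in the non-perfect case: one must control how generators of $H$ at the bottom levels can be "absorbed" by conjugation using generators higher up, without blowing the count past linear. I expect the right tool is the inequality of the type in (\ref{eq_solvable1}) applied inductively — writing $H\wr(\text{$n{-}1$ copies})$ and using that the correction term $\lceil (d(H)-2)/|X_1\cdots X_{n-1}|\rceil + 2$ is eventually just $2$ since $|X_1\cdots X_{n-1}|\to\infty$ while $d(H)$ is fixed, so the recursion reads $d_n^{wr}(H) \leq \max(d((H/H')\wr(\text{$n{-}1$ copies of }H)),\,2)$, and then one peels off abelianizations level by level, each step contributing a bounded amount, yielding a bound linear in $n$. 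Combining the two bounds gives $d_n^{wr}(H)=\Theta(n)$, i.e.\ linear growth, completing the proof.
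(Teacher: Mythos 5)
Your perfect case and your lower bound in the non-perfect case are fine and match the paper: boundedness follows from finite generation of $W=\ldots\wr H\wr H$ plus the projections onto the finite truncations, and the abelianization of the $n$-fold wreath power surjects onto $(H/H')^n$, giving $d_n^{wr}(H)\geq n\,d(H/H')$. But the linear \emph{upper} bound in the non-perfect case --- which you yourself flag as the ``main obstacle'' --- is left genuinely open in your write-up, and the route you propose does not close it. Formula (\ref{eq_solvable1}) is stated only for $H$ \emph{solvable}, which is not assumed here; and even where it applies, it reduces $d(H\wr G)$ to $d(H/H'\wr G)$ with $G$ still the full $(n-1)$-fold wreath power of $H$, so ``peeling off abelianizations level by level, each step contributing a bounded amount'' is not something the formula delivers --- you would need a separate argument that each peeling step costs $O(1)$, which is essentially the whole difficulty restated.

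The paper's upper bound $d_n^{wr}(H)\leq 2d(H\wr H\wr H\wr H)+n\,d(H/H')$ comes from an explicit generating set, reusing the machinery of Theorem~\ref{thm_main} rather than any citation: keep the boundedly many directed generators $g_1,\ldots,g_m$ of (\ref{eq_thm_defi_gi}) (after replacing $H$ by $H\wr H\wr H\wr H$ so the hypotheses of the first paragraph of that proof hold), which already force the rigid vertex stabilizers to contain $H'$ as rooted subgroups at every level; then, for each generator $a$ of $H/H'$ with preimage $b\in H$ and each level $i\leq n-1$, add one automorphism $b^{(i)}$ supported at the single vertex $y\ldots yx$ of length $i$. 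Transitivity and the rooted copies of $H'$ let one translate and correct these to obtain all of $H$ at every vertex, so roughly $2m+(n-1)d(H/H')$ elements suffice. If you want to salvage your proposal, this explicit construction (or some equally concrete $O(n)$ generating set) is the missing ingredient; as written, your argument proves $d_n^{wr}(H)\to\infty$ at least linearly but not that it grows \emph{only} linearly.
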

\begin{proof}
As shown above, if $H$ is perfect then
\[
d(H)\leq d_n^{wr}(H)\leq 2d(H\wr H).
\]

Let us show that if $H$ is non-perfect, then
\[
nd(H/H')\leq d^{wr}_n(H)\leq 2d(H\wr H\wr H\wr H)+nd(H/H')
\]
(here $H\wr H\wr H\wr H$ is to make sure that the action of commutant satisfies the first paragraph
of the proof of Theorem~\ref{thm_main}). The lower bound is clear. For the upper bound we proceed
as in the proof of Theorem~\ref{thm_main}. If $H=\langle h_1,\ldots,h_m\rangle$ then we define the
automorphisms $g_1,\ldots,g_m$ of the tree $\tree^{[n]}$ by (\ref{eq_thm_defi_gi}). For every
generator $a$ of $H/H'$ take its preimage $b$ in $H$ and construct $n-1$ automorphisms
$b^{(1)},\ldots,b^{(n-1)}$, where $b^{(i)}\at v=b$ if $v=y\ldots yx$ (here the length $=i$) and
trivial otherwise. The group generated by all constructed elements is isomorphic to $H\wr\ldots\wr
H\wr H$.
\end{proof}

\bibliographystyle{plain}
%\bibliography{wreath}

\end{document}